\newcommand\abs[1]{\lvert #1\rvert}
\newcommand\MYSPACE{\hspace{2em}}
\newtheorem{THM}{Theorem}[section]
\newtheorem{COR}[THM]{Corollary}
\newtheorem{PROP}[THM]{Proposition}
\theoremstyle{remark}
\theoremstyle{definition}
\begin{document}
\title[Unavoidable induced subgraphs]{Unavoidable induced subgraphs in large graphs with no homogeneous sets}
\author{Maria Chudnovsky}
\address{Princeton University,Princeton, NJ 08544, USA}
\email{mchudnov@math.princeton.edu}
\author{Ringi Kim}
\address{Princeton University,Princeton, NJ 08544, USA}
\email{ringikim@princeton.edu}
\author{Sang-il Oum}
\address{Department of Mathematical Sciences, KAIST, Daejeon, 34141 South Korea}
\email{sangil@kaist.edu}
\author{Paul Seymour}
\address{Princeton University,Princeton, NJ 08544, USA}
\email{pds@math.princeton.edu}
\thanks{M.~C.~is supported by NSF grant DMS-1265803.
 S.~O.~is supported by Basic Science Research
  Program through the National Research Foundation of Korea (NRF)
  funded by  the Ministry of Science, ICT \& Future Planning
  (2011-0011653). 
  P.~S.~is supported by ONR grant N00014-10-1-0680 and 
  NSF grant DMS-1265563.
}
\date{April 20, 2015; revised December 12, 2015}
\begin{abstract}
A \emph{homogeneous} set of an $n$-vertex graph is a set $X$ of vertices  ($2\le \abs{X}\le n-1$) such that every vertex not in $X$ is either complete or anticomplete  to $X$. A graph is called \emph{prime} if it has no homogeneous set.
A \emph{chain} of length $t$ is a sequence of $t+1$ vertices such that for every vertex in the sequence except the first one, its immediate predecessor is its unique neighbor or its unique non-neighbor among all of its predecessors. 
We prove that for all $n$, there exists $N$ such that every prime graph with at least $N$ vertices contains one of the following graphs or their complements as an induced subgraph:
(1) the graph obtained from $K_{1,n}$ by subdividing every edge once, 
(2) the line graph of $K_{2,n}$,
(3) the line graph of the graph in (1),
(4) the half-graph of height $n$, 
(5) a prime graph induced by a chain of length $n$,  
(6) two particular graphs obtained from the half-graph of height $n$ by making one side a clique and adding one vertex.
\end{abstract}
\keywords{modular decomposition, induced subgraph, prime graph, Ramsey}
\maketitle
\section{Introduction}\label{sec:intro}

All graphs in this paper are simple and undirected. 
We wish to prove a theorem analogous to the following theorems. (For missing definitions in the following, please refer to the referenced papers.)
\begin{description}
\item[Ramsey's theorem] \itshape \raggedright
Every sufficiently large graph contains $K_n$ or $\overline{K_n}$ as an induced subgraph.
\item[folklore; see Diestel~{\cite[Proposition 9.4.1]{Diestel2010}}] 
Every sufficiently large \textbf{connected} graph contains $K_n$, $K_{1,n}$, or a path of length $n$  as an induced subgraph.
\item[folklore; see Diestel~{\cite[Proposition 9.4.2]{Diestel2010}}] 
Every sufficiently large \textbf{$\mathbf 2$-connected} graph contains $C_n$ or $K_{2,n}$ as  a topological minor.
\item[Oporowski, Oxley, and Thomas~\cite{OOT1993}] Every sufficiently large \textbf{$\mathbf 3$-connected} graph contains the $n$-spoke wheel  or $K_{3,n}$ as  a minor.
\item[Oporowski, Oxley, and Thomas~\cite{OOT1993}] Every sufficiently large \textbf{internally $\mathbf 4$-connected} graph contains the $2n$-spoke double wheel, the $n$-rung circular ladder, the $n$-rung M\"obius ladder, or $K_{4,n}$ as  a minor.
\item [Ding, Chen~\cite{DC2004}] Every sufficiently large \textbf{connected and anticonnected} graph contains one of the following graphs or their complements as  an induced subgraph:  a path of length $n$, the graph obtained from $K_{1,n}$ by subdividing an edge once, $K_{2,n}$ minus one edge, or the graph obtained from $K_{2,n}$ by adding an edge between two degree-$n$ vertices $x_1$ and $x_2$ and adding a pendant edge at each $x_i$.
\item [Kwon, Oum~\cite{KO2014}] Every sufficiently large graph with \textbf{no non-trivial split} contains, as a vertex-minor, a cycle of length $n$ or the line graph of $K_{2,n}$.
\end{description}

These results state that every sufficiently large graph satisfying certain connectivity requirements contains at least one of the given graphs. Furthermore, in all these theorems, the list is best possible in the sense that each given graph satisfies the required connectivity, can grow its size arbitrary, and does not contain other given graphs in the list.

In this paper, we focus on graphs with no homogeneous sets. A set $X$ of vertices of a graph $G$ is \emph{homogeneous} if $2\le \abs{X}<\abs{V(G)}$ and every vertex outside of $X$ is either adjacent to all vertices in $X$ or adjacent to no vertex in $X$. In the literature, homogeneous sets are also called \emph{non-trivial modules}, \emph{partitive sets}, \emph{autonomous sets}, and various other terms~\cite{MR1984}. 
A graph is called \emph{prime} if it has no homogeneous set.

Homogeneous sets are widely used as a tool to study classes of graphs with respect to the induced subgraph relation.
If a graph $G$ is obtained from some non-trivial graph by substituting a non-trivial graph for a vertex, then $G$ has a homogeneous set and so $G$ is not prime. 
(A graph is \emph{non-trivial} if it has at least two vertices.) Thus, if a certain class $\mathcal C$ of graphs is closed under substitution
and a graph $H$ is not in $\mathcal C$ but all proper induced subgraphs of $H$ are in $\mathcal C$, then $H$ is prime.
Many important graph classes are closed under substitution.
For instance, Lov\'asz~\cite{Lovasz1972} proved that perfect graphs are closed under substitution and therefore all minimally imperfect graphs are prime.

We are interested in unavoidable induced subgraphs of prime graphs. Sumner~\cite{Sumner1971,Sumner1973} showed the following easy theorem.
\begin{THM}[Sumner~\cite{Sumner1971,Sumner1973}]
  Every prime graph with at least $3$ vertices has an induced path of length\footnote{A \emph{length} of a path is its number of edges.} $3$.
\end{THM}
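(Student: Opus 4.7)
The plan is to prove the contrapositive: every graph on at least three vertices that contains no induced $P_4$ must have a homogeneous set. First I dispose of the case $\abs{V(G)}=3$ by inspection, since each of $K_3$, $\overline{K_3}$, $P_3$, and $K_1\cup K_2$ admits a homogeneous pair. So I may assume $\abs{V(G)}\ge 4$, that $G$ contains no induced $P_4$, and, for contradiction, that $G$ is prime.

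The core step is the classical cograph dichotomy: if $\abs{V(G)}\ge 2$ and $G$ has no induced $P_4$, then $G$ or $\overline{G}$ is disconnected. I would either invoke Seinsche's theorem or prove it inline in a few lines: pick an edge $uv$ of $G$ and a shortest $u$--$v$ path in $\overline{G}$, say $u=x_0,x_1,\ldots,x_k=v$; since $uv$ is a $\overline{G}$-non-edge we have $k\ge 2$, and minimality forces $x_0x_2$ to be an edge of $G$; then a short case analysis on how $x_1$ attaches to a $G$-neighbor of $x_0$ (which exists by $G$-connectedness) exhibits an induced $P_4$. This dichotomy is the only step requiring real care, and I expect it to be the main, though still brief, obstacle.

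With the dichotomy in hand, I replace $G$ by $\overline{G}$ if necessary (legitimate because primality and $P_4$-freeness are both preserved under complementation, as $P_4$ is self-complementary) to assume $G$ itself is disconnected. If some component $C$ has $\abs{C}\ge 2$, then $2\le \abs{C}\le \abs{V(G)}-1$ and $C$ is homogeneous, because every vertex outside $C$ is anticomplete to $C$. Otherwise every component of $G$ is a singleton, so $G=\overline{K_n}$ with $n\ge 4$, and any pair of vertices forms a homogeneous set (the remaining $n-2\ge 2$ vertices are all anticomplete to both). Either outcome contradicts primality, completing the proof.
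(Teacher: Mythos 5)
The paper does not prove this theorem; it simply cites Sumner, so there is no in-paper argument to compare against. Your overall plan is the standard one and is correct in structure: prove the contrapositive, invoke the cograph dichotomy (Seinsche's theorem, that a $P_4$-free graph on at least two vertices has $G$ or $\overline{G}$ disconnected), pass to the complement if needed, and then read off a homogeneous set from a component of size at least $2$, or from any pair in $\overline{K_n}$. The $\abs{V(G)}=3$ case check is also fine (and in fact is already subsumed by the component argument, since a $2$-vertex subset qualifies as homogeneous once $\abs{V(G)}\ge 3$).

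The one genuine gap is in your inline sketch of Seinsche's theorem. After extracting $x_0,x_1,x_2$ with $x_0x_2\in E(G)$ and $x_0x_1,x_1x_2\notin E(G)$, you propose ``a short case analysis on how $x_1$ attaches to a $G$-neighbor of $x_0$.'' But if $w$ is a $G$-neighbor of $x_1$ (the natural vertex to introduce) and $w$ is complete to $\{x_0,x_2\}$, the four vertices induce a paw; if $w$ is anticomplete to $\{x_0,x_2\}$, they induce $2K_2$. Neither is a $P_4$, so the case analysis does not close as stated, and more work is needed (e.g., pushing further along the $\overline{G}$-path when it has length at least $3$, or, cleaner, the standard induction on $\abs{V(G)}$: delete a vertex $v$, by induction one of $G-v$, $\overline{G}-v$ is disconnected, and if $v$ is mixed on a component $C_1$ and has a neighbor in a second component $C_2$, choose adjacent $a,b\in C_1$ with $va\in E$, $vb\notin E$ and $c\in C_2$ with $vc\in E$; then $b,a,v,c$ induces a $P_4$). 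If you simply cite Seinsche's theorem as a known result, which you offer as an alternative, the proof is complete and correct.
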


What can we say for large prime graphs? 
Before stating our main theorem, we list some basic graph classes.
For a set $X$ of vertices, we say that $X$ is a \emph{clique} if every pair of distinct vertices in $X$ is adjacent and we say $X$ is \emph{independent} if every pair of vertices in $X$ is non-adjacent. 
A graph is a \emph{split graph} if its vertex set can be partitioned into a clique and an independent set.
\begin{itemize}
\item The \emph{half-graph} $H_n$  of height $n$ is a bipartite graph on $2n$ vertices
$a_1$, $a_2$, $\ldots$, $a_n$, $b_1$, $b_2$, $\ldots$, $b_n$
such that $a_i$ is adjacent to $b_j$ if and only if $i\ge j$.
\item The \emph{half split graph} $H'_n$  of height $n$ is the graph obtained from $H_n$ by making $\{b_1,b_2,\ldots,b_n\}$ a clique.
Let $H'_{n,I}$ be the graph obtained from $H'_n$ by adding a new
vertex adjacent to $a_1,a_2,\ldots,a_n$.
Let $H^*_{n}$ be the graph obtained from $H'_n$ by adding a new
vertex adjacent to $a_n$.
\item 
The \emph{thin spider} with $n$ legs is a split graph on $2n$ vertices consisting of an independent set $\{a_1,a_2,\ldots,a_n\}$ and a clique $\{b_1,b_2,\ldots,b_n\}$ such that $a_i$ is adjacent to $b_j$ if and only if $i=j$. 
Thus, the thin spider with $n$ legs is isomorphic to the line graph of $K_{1,n}^{(1)}$.
The \emph{thick spider} with $n$ legs is a split graph on $2n$ vertices consisting of an independent set $\{a_1,a_2,\ldots,a_n\}$ and a clique $\{b_1,b_2,\ldots,b_n\}$ such that $a_i$ is adjacent to $b_j$ if and only if $i\neq j$. 
So, a thick spider is the complement of a thin spider.
A \emph{spider} is a thin spider or a  thick spider.
\end{itemize}

A sequence $v_0,v_1,\ldots,v_n$ of distinct vertices of $G$ is called a \emph{chain} from a set $I\subseteq V(G)$ to $v_n$ if  $n\ge 2$, 
$v_0,v_1\in  I$, $v_2,v_3,\ldots,v_n\notin I$, 
and for $i>0$, $v_{i-1}$ is either the unique neighbor or the unique non-neighbor of $v_i$ in $\{v_0,v_1,\ldots,v_{i-1}\}$.
The \emph{length} of a chain $v_0,v_1,\ldots,v_n$ is $n$.
For example,  a sequence of vertices inducing  a path of length $t$ is a chain of length $t$.

Here is our main theorem.
\begin{figure}
  \centering 
 \tikzstyle{v}=[circle, draw, solid, fill=black, inner sep=0pt, minimum width=3pt]
\subfloat[$K_{1,5}^{(1)}$: the $1$-subdivision of $K_{1,5}$]{
 \begin{tikzpicture}[scale=0.6]
   \node (dummy) at (-1,0) {};\node (dummy2) at (3,0) {};
   \node [v] (v0) at (-1, -3){};
      \foreach \x in {1,...,5} {
        \node [v]  (v\x) at(0,-\x){};        \node [v]  (w\x) at (2,-\x){};
     }
      \foreach \x in {1,...,5} {
      \draw (v0)--(v\x)--(w\x);
      }
    \end{tikzpicture}
}%
\subfloat[$\overline{K_{1,5}^{(1)}}$]{
 \begin{tikzpicture}[scale=0.6]
      \node [v] (v0) at (-1, -3){};
      \foreach \x in {1,...,5} {
        \node [v]  (v\x) at(0,-\x){};        \node [v]  (w\x) at (2,-\x){};
        \draw (v0)--(v\x);
     }
      \foreach \i in {1,2,3,4} {
        \foreach \j in {\i,...,5} {
          \draw[bend right] (v\i) to (v\j);
          \draw[bend left] (w\i) to (w\j);
          }
          }
      \foreach \x in {2,...,5} {      \draw (v1)--(w\x);      }
      \foreach \x in {1,3,4,5} {      \draw (v2)--(w\x);      }
      \foreach \x in {1,2,4,5} {      \draw (v3)--(w\x);      }
      \foreach \x in {1,2,3,5} {      \draw (v4)--(w\x);      }
      \foreach \x in {1,...,4} {      \draw (v5)--(w\x);      }
    \end{tikzpicture}
}%
\subfloat[$L(K_{2,5})$]{
 \begin{tikzpicture}[scale=0.6]
   \node (dummy) at (-1,0) {};\node (dummy2) at (3,0) {};
      \foreach \x in {1,...,5} {
        \node [v]  (v\x) at(0,-\x){};        \node [v]  (w\x) at (2,-\x){};
     }
      \foreach \i in {1,2,3,4} {
        \foreach \j in {\i,...,5} {
          \draw[bend right] (v\i) to (v\j);
          \draw[bend left] (w\i) to (w\j);
          }
          }
      \foreach \x in {1,...,5} {
      \draw (v\x)--(w\x);
      }
    \end{tikzpicture}
}
\subfloat[$\overline{L(K_{2,5})}$]{
 \begin{tikzpicture}[scale=0.6]
   \node (dummy) at (-1,0) {};%
      \foreach \x in {1,...,5} {
        \node [v]  (v\x) at(0,-\x){};        \node [v]  (w\x) at (2,-\x){};
     }
      \foreach \x in {2,...,5} {      \draw (v1)--(w\x);      }
      \foreach \x in {1,3,4,5} {      \draw (v2)--(w\x);      }
      \foreach \x in {1,2,4,5} {      \draw (v3)--(w\x);      }
      \foreach \x in {1,2,3,5} {      \draw (v4)--(w\x);      }
      \foreach \x in {1,...,4} {      \draw (v5)--(w\x);      }
    \end{tikzpicture}
}\\
\subfloat[A  thin spider with $5$ legs]{
 \begin{tikzpicture}[scale=0.6]
   \node (dummy) at (-1,0) {};\node (dummy2) at (3,0) {};
      \foreach \x in {1,...,5} {
        \node [v]  (v\x) at(0,-\x){};        \node [v]  (w\x) at (2,-\x){};
     }
      \foreach \i in {1,2,3,4} {
        \foreach \j in {\i,...,5} {
          \draw[bend left] (w\i) to (w\j);
          }
          }
      \foreach \x in {1,...,5} {
      \draw (v\x)--(w\x);
      }
    \end{tikzpicture}
}%
\subfloat[A  thick spider with $5$ legs]{
 \begin{tikzpicture}[scale=0.6]
   \node (dummy) at (-1,0) {};\node (dummy2) at (3,0) {};
      \foreach \x in {1,...,5} {
        \node [v]  (v\x) at(0,-\x){};        \node [v]  (w\x) at (2,-\x){};
     }
      \foreach \i in {1,2,3,4} {
        \foreach \j in {\i,...,5} {
          \draw[bend left] (w\i) to (w\j);
          }
          }
      \foreach \x in {2,...,5} {      \draw (v1)--(w\x);      }
      \foreach \x in {1,3,4,5} {      \draw (v2)--(w\x);      }
      \foreach \x in {1,2,4,5} {      \draw (v3)--(w\x);      }
      \foreach \x in {1,2,3,5} {      \draw (v4)--(w\x);      }
      \foreach \x in {1,...,4} {      \draw (v5)--(w\x);      }
    \end{tikzpicture}
}\quad
\subfloat[$H_5$: a half-graph of height $5$]{
 \begin{tikzpicture}[scale=0.6]
   \node (dummy) at (-1,0) {};%
      \foreach \x in {1,...,5} {
        \node [v]  (v\x) at(0,-\x){};        \node [v]  (w\x) at (2,-\x){};
     }
     \foreach \x in {1,...,5} {
       \foreach \y in {\x,...,5}{
         \draw (v\x)--(w\y);
       }
     }
    \end{tikzpicture}
}\\
\subfloat[$\overline{H_5}$]{
 \begin{tikzpicture}[scale=0.6]
      \foreach \x in {1,...,5} {
        \node [v]  (v\x) at(0,-\x){};        \node [v]  (w\x) at (2,-\x){};
     }
      \foreach \i in {1,2,3,4} {
        \foreach \j in {\i,...,5} {
          \draw[bend right] (v\i) to (v\j);
          \draw[bend left] (w\i) to (w\j);
          }
       }
       \draw(v2)--(w1);
       \draw(v3)--(w1);
       \draw(v3)--(w2);
       \draw(v4)--(w1);
       \draw(v4)--(w2);
       \draw(v4)--(w3);
       \foreach \y in {1,...,4}{
         \draw (v5)--(w\y);
       }
    \end{tikzpicture}
}%
\subfloat[$H'_{5,I}$ (isomorphic to $\overline{H'_{5,I}}$)]{
 \begin{tikzpicture}[scale=0.6]
   \node [v] (v0) at (-1, -3){};
   \foreach \x in {1,...,5} {
        \node [v]  (v\x) at(0,-\x){};        \node [v]  (w\x) at (2,-\x){};
     }
      \foreach \i in {1,2,3,4} {
        \foreach \j in {\i,...,5} {
          \draw[bend left] (w\i) to (w\j);
          }
          }
     \foreach \x in {1,...,5} {
       \foreach \y in {\x,...,5}{
         \draw (v0)--(v\x)--(w\y);
       }
     }
    \end{tikzpicture}
}%
\subfloat[$H^*_{5}$]{
 \begin{tikzpicture}[scale=0.6]
   \node [v] (v0) at (-1, -1){};
   \foreach \x in {1,...,5} {
        \node [v]  (v\x) at(0,-\x){};        \node [v]  (w\x) at (2,-\x){};
     }
      \foreach \i in {1,2,3,4} {
        \foreach \j in {\i,...,5} {
          \draw[bend left] (w\i) to (w\j);
          }
          }
     \foreach \x in {1,...,5} {
       \foreach \y in {\x,...,5}{
         \draw (v\x)--(w\y);
       }
     }
     \draw (v0)--(v1);
    \end{tikzpicture}
}\quad
 \subfloat[$\overline{H^*_{5}}$]{
  \begin{tikzpicture}[scale=0.6]
   \node [v] (v0) at (3, -2.5){};
   \foreach \x in {1,...,5} {
        \node [v]  (v\x) at(0,-\x){};        \node [v]  (w\x) at (2,-\x){};
     }
      \foreach \i in {1,2,3,4} {
        \foreach \j in {\i,...,5} {
          \draw[bend left] (w\i) to (w\j);
          }
          }
     \foreach \x in {1,...,5} {
       \foreach \y in {\x,...,5}{
         \draw (v\x)--(w\y);
       }
     }
     \foreach \x in {1,...,4} {
       \draw (v0)--(w\x);
     }
  \end{tikzpicture}
 }
  \caption{Typical prime graphs}
  \label{fig:unavoid}
\end{figure}
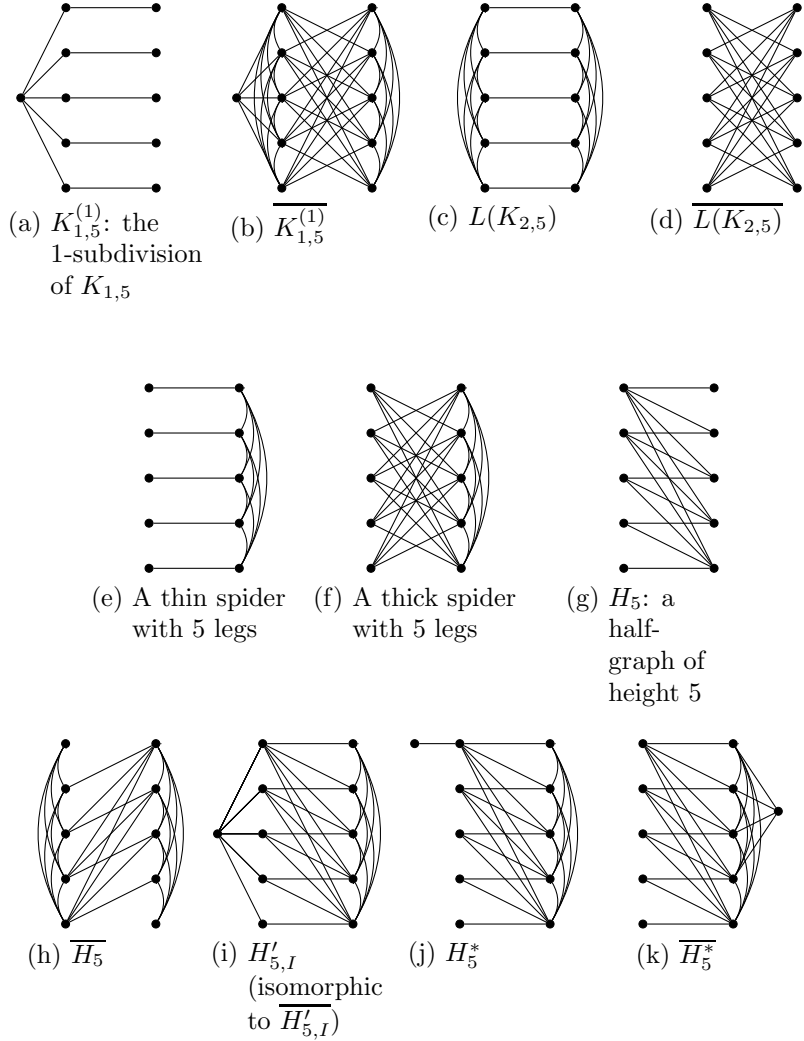
\begin{figure}
  \centering
 \tikzstyle{v}=[circle, draw, solid, fill=black, inner sep=0pt, minimum width=3pt]

  \subfloat[$5K_2$]{
    \begin{tikzpicture}[scale=0.6]
      \foreach \x in {1,...,5} {
        \node [v]  (v\x) at(0,-\x){};        \node [v]  (w\x) at (2,-\x){};
      }
      \foreach \x in {1,...,5} {
        \draw (v\x)--(w\x);
      }
    \end{tikzpicture}
  }\MYSPACE
\subfloat[$H_5'$: the half split graph of height $5$]{ 
 \begin{tikzpicture}[scale=0.6]
   \node (dummy) at (-2,0) {};\node (dummy2) at (4,0) {};
   \foreach \x in {1,...,5} {
        \node [v]  (v\x) at(0,-\x){};        \node [v]  (w\x) at (2,-\x){};
     }
      \foreach \i in {1,2,3,4} {
        \foreach \j in {\i,...,5} {
          \draw[bend left] (w\i) to (w\j);
          }
          }
     \foreach \x in {1,...,5} {
       \foreach \y in {\x,...,5}{
         \draw (v\x)--(w\y);
       }
     }
    \end{tikzpicture}
}%

  \caption{Some non-prime graphs}
  \label{fig:other}
\end{figure}
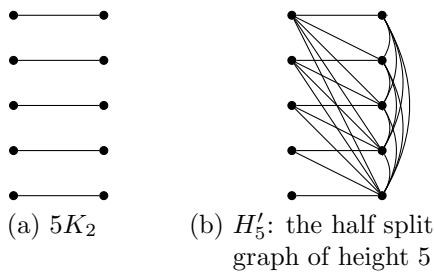

\begin{THM}\label{thm:main}
  For every integer $n\ge3$, there exists $N$ such that every prime graph with at least $N$ vertices contains  one of the following graphs or their complements as an induced subgraph.
  \begin{enumerate}
  \item The $1$-subdivision of $K_{1,n}$ (denoted by $K_{1,n}^{(1)}$).
  \item The line graph of $K_{2,n}$.
  \item The thin spider with $n$ legs.
  \item The half-graph of height $n$.
  \item The graph $H'_{n,I}$.
  \item The graph $H^*_{n}$.
  \item A prime graph induced by a chain of length $n$.
  \end{enumerate}
\end{THM}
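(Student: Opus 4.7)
The plan is to combine Ramsey's theorem with the primeness hypothesis in three phases: reduce to a large monochromatic background set, grow a long chain against it, and then classify the chain to extract one of the seven target configurations.

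First I would apply Ramsey's theorem with a gigantic parameter $m = m(n)$ to produce, inside $G$, either a clique or an independent set of size $m$. Since the list of forbidden induced subgraphs is closed under complementation, I may assume this monochromatic set is an independent set $I$. Because $G$ is prime, $I$ is not a module, so some outside vertex distinguishes two vertices of $I$; this seeds a chain construction. Concretely, I would build a chain $v_0, v_1, v_2, \ldots, v_t$ with $v_0, v_1 \in I$, adding at each step a vertex $v_i$ for which $v_{i-1}$ is the unique neighbor or the unique non-neighbor in $\{v_0, \ldots, v_{i-1}\}$. The crucial bookkeeping is to maintain throughout a large residual subset $I' \subseteq I$ over which the current chain acts uniformly (all vertices of $I'$ have the same adjacency pattern with $v_0, \ldots, v_i$), which is achievable by pigeonholing after each extension. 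A quantitative iteration, made possible by the enormous initial size $m$, ensures the final chain is long in terms of $n$.

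Once I have a long chain $v_0, \ldots, v_t$ together with a large residual $I'$, I would split into two cases. Case A: $G[\{v_0, \ldots, v_t\}]$ is prime. Then a truncation of length $n$ furnishes conclusion (7) of the theorem. Case B: the chain induces a non-prime graph, so it contains a homogeneous set $M$. The sequential distinguishing structure of a chain severely restricts what $M$ can look like, and combining this with the uniform interaction of the chain with $I'$ forces one of the structured targets (1)--(6). For instance, a Ramsey reduction on the binary ``unique neighbor'' versus ``unique non-neighbor'' types along the chain singles out monochromatic sub-chains that behave like paths or their complements, and coupling these with appropriately chosen vertices from $I'$ upgrades them to half-graphs, thin spiders, $K_{1,n}^{(1)}$, $L(K_{2,n})$, or the hybrid graphs $H'_{n,I}$ and $H^*_n$.

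The main obstacle will be the last step: showing that every non-prime chain, together with its residual interaction with $I'$, actually contains one of the six structured targets. This requires a careful catalogue of how homogeneous sets can sit inside a chain, verifying for each possibility that the surrounding chain vertices and the residual $I'$ supply the missing edges or non-edges to complete one of the specific configurations. The very presence of the auxiliary graphs $H'_{n,I}$ and $H^*_n$ in the list---which are precisely the obstructions that appear when a would-be half-graph is ``polluted'' by a single extra vertex forced by the chain construction---suggests that this final combinatorial dissection is where the bulk of the technical work lies, and that tightly controlling the quantitative bounds so that they all combine into a single $N=N(n)$ will also be delicate.
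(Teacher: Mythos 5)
Your plan has a genuine gap at its core, which is the idea of \emph{growing} a long chain against the independent set. Primeness (via Proposition~\ref{prop:chain}) guarantees that for each target vertex $v\notin I$ there \emph{exists} a chain from $I$ to $v$, but the length of that chain is dictated by the graph, not by you: it is essentially the shortest path in an auxiliary digraph, and you cannot keep ``adding another step'' to make it longer. In a large thin spider, for instance, every chain from two legs to any other vertex has length $3$ regardless of how big the graph is, so no amount of iteration or initial Ramsey size forces a long chain. The paper sidesteps this by using Corollary~\ref{cor:prime} to \emph{assume} no long chain exists and then working with the abundance of \emph{short} chains.

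The second gap is in your Case~B. If the chain is short, the homogeneous set inside it involves only $O(1)$ vertices, and by construction the residual $I'$ is an independent set all of whose vertices see the chain the same way. A constant-size gadget plus a uniform independent set cannot manufacture a half-graph of height $n$, a spider with $n$ legs, or $L(K_{2,n})$; those require $\Theta(n)$ vertices carrying $\Theta(n)$ distinct adjacency patterns, and the uniformity of $I'$ is exactly what rules this out. The paper's actual source of structure is different: Proposition~\ref{prop:stable} builds a ``regular triple'' $(A,X,Y)$ by repeatedly peeling off mixed vertices from a shrinking independent set and Ramsey-colouring the interactions among the $x_i,y_i$, which is how the spider, $\overline{L(K_{2,n})}$, half-graph, matching, and half-split graph cases arise; and Proposition~\ref{prop:matching} handles the matching case by Ramsey-colouring how the \emph{third vertices of many distinct short chains} interact with each other, not with $I'$. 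Your sketch also never produces or exploits the half-split graph, which in Proposition~\ref{prop:half} is the intermediate structure from which $H'_{n,I}$, $H^*_n$, and $\overline{H^*_n}$ are actually extracted by running a chain from one corner of the half-split graph to the opposite corner. So while the opening Ramsey step and the ``prime chain gives (7)'' observation match the paper, the engine of the proof is missing.
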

Note that all graphs in (1)--(7) are prime.
It is straightforward to prove that 
none of the graphs in the list can be omitted, by showing that none of the graphs in the list contains a large graph of other types or the complement of large graphs in the list.

\section{Chain}\label{sec:chain}
A path is an object to certify connectedness and a chain can be used analogously to certify primeness. In the next proposition, we will prove that in a graph $G$, there  is a chain from every set of two vertices to every other vertex if and only if $G$ is prime.
We say that for a set $X$ of vertices, a vertex $v\notin X$ is \emph{mixed} on $X$ if $v$ has both a neighbor and a non-neighbor in $X$.
We remark that Zverovich~\cite{Zverovich2003} introduced a similar notion called a pseudopath. 

\begin{PROP}\label{prop:chain}
  Let $I\subseteq V(G)$ be a set with at least two vertices.
  and let $v$ be a vertex in $V(G)\setminus I$.
  Then $G$ has a chain from
  $I$ to $v$ if and only if all homogeneous sets containing
  $I$ as a subset contain $v$.
\end{PROP}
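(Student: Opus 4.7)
The plan is to prove each direction separately. The forward direction follows from a minimality argument along the chain, while the backward direction is handled in contrapositive form by constructing an explicit homogeneous set from the collection of chain endpoints.

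For the forward direction, suppose some homogeneous set $H$ with $I\subseteq H$ omits $v$, and fix a chain $v_0,v_1,\ldots,v_n=v$ from $I$ to $v$. Choose the smallest $j$ with $v_j\notin H$; since $v_0,v_1\in I\subseteq H$, we have $j\geq 2$. The chain condition forces $v_{j-1}$ to be the unique neighbor or the unique non-neighbor of $v_j$ in $\{v_0,\ldots,v_{j-1}\}$, so $v_j$ has opposite adjacency to $v_{j-1}$ and to $v_0$. Both of these lie in $H$, making $v_j$ mixed on $H$ and contradicting homogeneity.

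For the backward direction, let $R$ be the union of $I$ with the set of endpoints of all chains from $I$. Assuming $v\notin R$, the aim is to show that $R$ is itself a homogeneous set containing $I$ but not $v$. The size bound $2\leq \abs{R}<\abs{V(G)}$ is automatic, so the only task is to check that no vertex outside $R$ is mixed on $R$. The key tool is a chain-extension lemma: if $u\notin I$ is mixed on the vertex set of some chain $v_0,\ldots,v_k$ from $I$, then $u$ is itself the endpoint of a chain from $I$. To prove this, take the smallest $i\geq 1$ such that $u$ is mixed on $\{v_0,\ldots,v_i\}$; by minimality $u$ is complete or anticomplete to $\{v_0,\ldots,v_{i-1}\}$ (trivially if $i=1$), while $v_i$ has the opposite relation to $u$. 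Hence $v_0,\ldots,v_i,u$ satisfies all the chain axioms: the prefix is a chain, $u\notin I$, and $v_i$ is the unique neighbor or unique non-neighbor of $u$ in $\{v_0,\ldots,v_i\}$.

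Given this lemma, suppose for contradiction that $u\notin R$ is mixed on $R$. If $u$ is already mixed on $I$, pick $x,y\in I$ with $u$ adjacent to $x$ and non-adjacent to $y$; then $y,x,u$ is a chain of length $2$ from $I$ to $u$, contradicting $u\notin R$. Otherwise $u$ is, without loss of generality, complete to $I$, and the mixed property supplies a non-neighbor $y\in R$. Necessarily $y\notin I$, so $y$ is the endpoint of a chain $v_0,\ldots,v_k=y$ from $I$; since $u$ is adjacent to $v_0,v_1\in I$ but not to $v_k$, the vertex $u$ is mixed on this chain, and the lemma yields the contradiction $u\in R$. I expect the main subtlety to be verifying the chain-extension lemma uniformly across the $i=1$ edge case and the general $i\geq 2$ case; once that is done, the outer case analysis on $u$'s relation to $I$ is essentially forced.
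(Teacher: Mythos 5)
Your proof is correct. The forward direction is word-for-word the same minimality argument the paper uses. The backward direction takes a genuinely different route to the same goal: the paper introduces an auxiliary digraph $H$ on $\{\eta\}\cup(V(G)\setminus I)$ whose arcs encode ``$y$ is not mixed on $I$ but becomes mixed once $x$ is added,'' lets $Z$ be the set of vertices reachable from $\eta$, argues that $(Z\setminus\{\eta\})\cup I$ would be a homogeneous set omitting $v$ if $v\notin Z$, and then converts a shortest $\eta$-to-$v$ directed path into the desired chain. You instead bypass the digraph entirely, setting $R$ equal to $I$ together with all chain endpoints, and hinge the argument on a clean chain-extension lemma: any $u\notin I$ mixed on the vertex set of a chain from $I$ is itself reachable by a chain, obtained by truncating at the first prefix on which $u$ is mixed and appending $u$. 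From that lemma, showing $R$ is homogeneous when $v\notin R$ is a short two-case check (mixed on $I$, or pure on $I$ but mixed on $R$), and you get the contrapositive directly. The underlying insight -- that the set of ``chain-accessible'' vertices must be homogeneous if it omits $v$ -- is shared, but your packaging avoids both the auxiliary digraph and the shortest-path argument (which the paper compresses into ``it follows easily''), at the mild cost of not producing the chain to $v$ constructively; that construction is instead absorbed into the base case of your lemma. Both versions handle the $i=1$ versus $i\geq 2$ edge case correctly, and you were right to flag that as the delicate point.
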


\begin{proof}
  For the forward direction, let $X$ be a set such that $I\subseteq X$ and $v\notin X$. Let $v_0,v_1,\ldots,v_n=v$ be a chain. Let $i$ be the minimum positive integer such that $v_i\notin X$. Then $i\ge 2$ because $\abs{I}\ge 2$.
  Since $v_i$ is mixed on $\{v_0,v_1,\ldots,v_{i-1}\}\subseteq X$, $X$ cannot be homogeneous.

  For the backward direction, let us construct an auxiliary digraph $H$
  on $\{\eta\}\cup (V(G)\setminus I)$. 
  For a vertex $w\in V(G)\setminus I$, $(\eta,w)$ is an arc of $H$ if and only if 
  $w$ is mixed on $I$.
  For two distinct vertices $x,y\in V(G)\setminus I$, 
  $(x,y)$ is an arc of $H$ if and only if 
  $y$ is not mixed on $I$ 
  and $y$ is mixed on $I\cup \{x\}$.

  Suppose that all homogeneous sets containing $I$ as a subset contain $v$ and 
  $v$ cannot be reached from $\eta$ by a directed path in $H$.
  Let $Z$ be the set of vertices that can be reached from $\eta$ by a directed path in $H$. If $y\in V(G)\setminus Z$, then 
  $y$ is not mixed on $I$, 
  as $(\eta,y)$ is not an arc of $H$.
  Furthermore, for each $x\in Z\setminus\{\eta\}$, as $(x,y)$ is not an arc of $H$, $y$ is not mixed on $I\cup \{x\}$.
  Therefore, $Z'=(Z\setminus \{\eta\})\cup I$ is homogeneous in $G$, contradicting our assumption that there is no homogeneous set containing $I$ but not containing $v$.

  Thus, $H$ has a shortest directed path $P$ from $\eta$ to $v$ with vertices $\eta$, $w_1$, $w_2$, $\ldots$, $w_n=v$ in order. Let $v_0,v_1$ be a neighbor and a non-neighbor of $w_1$ in $I$, respectively. Such a choice exists because $w_1$ is mixed on $I$. It follows easily that the sequence \[v_0,v_1,w_1,w_2,\ldots,w_n\] is a chain from $I$ to $v$.
\end{proof}
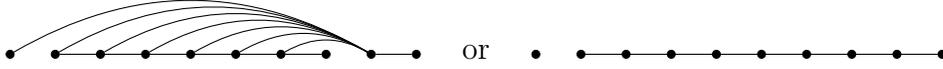
\begin{figure}
  \centering
  \tikzstyle{v}=[circle, draw, solid, fill=black, inner sep=0pt, minimum width=3pt]
  \begin{tikzpicture}[scale=0.6]
  \foreach \x in {0,...,9} {
    \node [v] (v\x) at (\x,0) {};
  }
  \draw (v1) to (v7);
  \draw (v8) to (v9);
  \foreach \y in {0,...,6} {
    \draw [bend right] (v8) to (v\y);
  }
  \end{tikzpicture} \quad or \quad
  \begin{tikzpicture}[scale=0.6]
  \foreach \x in {0,...,9} {
    \node [v] (v\x) at (\x,0) {};
  }
  \draw (v1) to (v9);
  \end{tikzpicture}
  \caption{Two non-prime graphs induced by chains}
  \label{fig:nonprimechain}
\end{figure}

Not every chain induces a prime graph. For instance, the graphs in Figure~\ref{fig:nonprimechain} are induced by chains but are not prime. In the following, we characterize which chains induce prime subgraphs.

\begin{PROP}\label{prop:primechain}
  Let $t\ge 3$ and $C=v_0,v_1,\ldots,v_t$ be a chain of length $t$ in $G$. Then $C$ induces a prime subgraph of $G$ if and only if 
  each of $v_0$ and $v_1$ have a neighbor other than $v_{t-1}$ and a non-neighbor other than $v_{t-1}$ in the chain.
\end{PROP}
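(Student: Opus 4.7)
The plan is to prove both directions separately. For the forward direction I exhibit an explicit homogeneous set whenever the hypothesis on $v_0$ or $v_1$ fails; for the backward direction I show that every homogeneous set of the chain is forced into a short list of shapes, each of which would already require the hypothesis to fail.

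For the forward direction, suppose without loss of generality that $v_0$ violates the condition, so $v_0$ is either complete to or anticomplete to $S:=\{v_1,v_2,\ldots,v_{t-2},v_t\}$; let $c\in\{0,1\}$ be the common adjacency value of $v_0$ on $S$. If $v_0v_{t-1}$ also equals $c$, then $v_0$ is uniformly related to all of $V(C)\setminus\{v_0\}$, so $V(C)\setminus\{v_0\}$ is a homogeneous set. Otherwise, $v_0v_{t-1}$ equals $1-c$, and a direct computation using the chain rule (for $i\ge 2$, $v_i$'s relation to $\{v_0,\ldots,v_{i-2}\}$ is uniform while its relation to $v_{i-1}$ is the opposite) shows that for every $v_j$ with $1\le j\le t-2$, the adjacencies $v_jv_0$ and $v_jv_t$ both equal $c$, while $v_{t-1}v_0$ and $v_{t-1}v_t$ both equal $1-c$; hence $\{v_0,v_t\}$ is a homogeneous set. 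The case where $v_1$ fails is handled symmetrically, producing $V(C)\setminus\{v_1\}$ or $\{v_1,v_t\}$ homogeneous; the only bookkeeping difference is that $v_2$ plays the ``exception'' role relative to $v_1$ in the chain.

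For the backward direction, assume both $v_0$ and $v_1$ satisfy the condition, and suppose toward a contradiction that $X\subseteq V(C)$ is a homogeneous set with $2\le\abs{X}\le t$. First, $v_t\in X$: letting $k=\max\{i:v_i\in X\}$ and assuming $k<t$, pick $v_i\in X$ with $i<k$ (possible since $\abs{X}\ge 2$); then $v_{k+1}\notin X$ sees $v_k$ oppositely from how it sees $v_i$ by the chain rule, so $v_{k+1}$ is mixed on $X$. Next, I claim $X\in\{V(C)\setminus\{v_0\},\,V(C)\setminus\{v_1\},\,\{v_0,v_t\},\,\{v_1,v_t\}\}$. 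The key observation is that for any $v_j\notin X$ with $j\ge 2$, since $v_j$'s relation to $v_{j-1}$ is opposite its uniform relation to $\{v_0,\ldots,v_{j-2}\}$, whenever $v_{j-1}\in X$ we must have $X\cap\{v_0,\ldots,v_{j-2}\}=\emptyset$. Applying this observation together with its dual on the suffix ending at $v_t$, and cascading the resulting constraints on the chain's adjacency pattern, forces the ``exceptional'' positions in $X$ to collapse to the first two indices or to the last one, pinning $X$ down to the four listed sets. Finally, each of these four sets being homogeneous forces $v_0$ or $v_1$ to be complete or anticomplete to $\{v_1,\ldots,v_{t-2},v_t\}$ (or the analogous set for $v_1$), contradicting the hypothesis.

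The main obstacle is the structural step in the backward direction. Intuitively, the chain rule makes every ``outside'' vertex $v_j\notin X$ with $j\ge 2$ sharply distinguish its predecessor $v_{j-1}$, so the location of consecutive gaps in $X$ imposes contradictory constraints on the chain's adjacency pattern unless almost all of $X$ or all of its complement is crammed into the first two indices or the last one. The bookkeeping is notationally heavy but mechanical, and the rough $v_0\leftrightarrow v_1$ symmetry cuts the case analysis in half.
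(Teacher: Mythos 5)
Your forward direction is correct and closely matches the paper's: you locate the homogeneous set as either $V(C)\setminus\{v_0\}$ (when $v_0$ is uniform to all of $V(C)\setminus\{v_0\}$) or $\{v_0,v_t\}$ (when $v_0$'s adjacency to $v_{t-1}$ is the exceptional one), and the verification that $\{v_0,v_t\}$ is homogeneous via the chain rule at $v_t$ is right. Your opening of the backward direction (showing $v_t\in X$ via $k=\max\{i:v_i\in X\}$, and arguing at most one of $v_0,v_1$ lies in $X$) is also fine.

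The gap is in the structural claim that $X$ must be one of the four sets $V(C)\setminus\{v_0\}$, $V(C)\setminus\{v_1\}$, $\{v_0,v_t\}$, $\{v_1,v_t\}$. Your stated ``key observation'' (if $v_{j-1}\in X$, $v_j\notin X$, $j\ge 2$, then $X\cap\{v_0,\ldots,v_{j-2}\}=\emptyset$) constrains \emph{drop-outs} from $X$, but says nothing about \emph{re-entries}: it does not preclude sets such as $\{v_1,v_k,v_{k+1},\ldots,v_t\}$ or $\{v_1,v_k,v_t\}$ with $2<k<t$, which have exactly one drop (at $j=2$) and one re-entry. Ruling these out requires a further argument involving the chain rule at later indices (for example, the anti-uniformity of some $v_j\notin X$ between the two ``islands'' of $X$ against $v_1$ versus $v_t$), and you invoke this only as ``its dual on the suffix ending at $v_t$'' together with ``cascading constraints,'' without defining that dual or doing the cascade. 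There is no genuine dual here: the chain condition is not symmetric under reversal, so the needed step is not a mirror of your observation. The paper circumvents exactly this by normalizing (WLOG $v_0$ is non-adjacent to $v_t$ after complementation, whence $v_{t-1}$ is the unique neighbor of $v_t$) and then proving the single, clean claim that for $1\le a\le t-2$, $v_a\notin X$ forces $v_{a+1}\notin X$, which at once eliminates all re-entries. To close your gap, you should either carry out an analogous normalization and prove a ``once out, stay out'' claim, or spell out the chain-rule computation that rules out each re-entry pattern — this is a real step, not mechanical bookkeeping.
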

This proposition implies that if a graph $G$ is induced by a chain and is not prime, then $G$ is isomorphic to one of the graphs in Figure~\ref{fig:nonprimechain} and their complements.
\begin{proof}
  We may assume that $V(G)=\{v_0,v_1,\ldots,v_{t}\}$.
  Note that we may swap $v_0$ and $v_1$ and still obtain a chain.

  Let us first prove the forward direction. Suppose that $G$ is prime. By taking the complement and swapping $v_0$ and $v_1$ if necessary, we may assume that $v_0$ has no neighbor other than $v_{t-1}$.
If $v_0$ is non-adjacent to $v_{t-1}$, then 
$\{v_1,v_2,\ldots,v_{t}\}$ is homogeneous.
Therefore
$v_0$ is adjacent to $v_{t-1}$. It follows that $v_{t-1}$ is the unique neighbor of $v_t$ and so $\{v_0,v_{t}\}$ is homogeneous.
This contradicts our assumption that $G$ is prime.

  For the backward direction, suppose that each of $v_0$ and $v_1$ have a neighbor other than $v_{t-1}$ and a non-neighbor other than $v_{t-1}$ and furthermore $G$ has  a homogeneous set $X$. By Proposition~\ref{prop:chain}, either $v_0\notin X$ or $v_1\notin X$, because otherwise $V(G)\subseteq X$. We may assume that $v_0\notin X$ by swapping $v_0$ and $v_1$ if necessary.
  
    We may assume that $v_t$ is non-adjacent to $v_0$ by taking the complement graph if needed. Thus $v_{t-1}$ is the unique neighbor of $v_t$.

    For $0\le i<j<t$, the sequence $v_i,v_j,v_{j+1},v_{j+2},\ldots,v_t$ is a chain and by Proposition~\ref{prop:chain}, if $v_i,v_j\in X$, then $v_{j+1},v_{j+2},\ldots,v_t\in X$. Since $\abs{X}\ge 2$, $v_t\in X$.

    We claim that for each $a\in \{1,2,3,\ldots,t-2\}$, if $v_a\notin X$, then $v_{a+1}\notin X$. Suppose not. Then $v_0,v_{a}$ are non-adjacent to $v_t\in X$ and so they are non-adjacent to $v_{a+1}\in X$, contradicting the definition of a chain. 
    This implies that $v_1\in X$ because $\abs{X}\ge 2$. 

    If $v_2\in X$, then $v_{3},v_{4},\ldots,v_t\in X$ because $v_1\in X$. Since $v_0$ is non-adjacent to $v_t$, no neighbor of $v_0$ is in $X$, contradicting our assumption that $v_0$ has a neighbor other than $v_{t-1}$.
    Thus, $v_2\notin X$ and so $v_2,v_3,\ldots,v_{t-1}\notin X$.  

    So $X=\{v_1,v_t\}$.
    However, $v_{t-1}$ is the unique neighbor of $v_t$ and $v_1$ has a neighbor other than $v_{t-1}$ and therefore $X$ is not homogeneous.
\end{proof}

\begin{COR}\label{cor:prime}
  Let $t> 3$.
  Every chain of length $t$ contains a chain of length $t-1$ inducing a prime subgraph.
\end{COR}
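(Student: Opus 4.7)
The plan is to consider three natural subchains of length $t-1$ of the chain $C = v_0, v_1, \ldots, v_t$, namely $C_a = v_0, v_1, \ldots, v_{t-1}$ (drop the tail $v_t$), $C_b = v_1, v_2, \ldots, v_t$ (drop $v_0$, taking $\{v_1, v_2\}$ as the new initial pair), and $C_c = v_0, v_2, v_3, \ldots, v_t$ (drop $v_1$, taking $\{v_0, v_2\}$ as the new initial pair). Each is itself a chain of length $t-1$: if $v_{i-1}$ is the unique neighbor or unique non-neighbor of $v_i$ in a set, then it remains so in any subset containing $v_{i-1}$, and so the defining property of a chain is inherited by each of these three subsequences. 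The claim is that one of $C_a, C_b, C_c$ induces a prime subgraph.

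If $C_a$ induces a prime subgraph we are done, so assume not. I apply Proposition~\ref{prop:primechain} to $C_a$ (a chain of length $t-1 \ge 3$). After (i) possibly swapping $v_0$ and $v_1$ in $C$, which yields another valid chain, changes neither $G[C]$ nor $G[C_a]$, and merely interchanges the roles of $C_b$ and $C_c$; and (ii) possibly replacing $G$ by its complement, which preserves the chain property and the notion of primeness, I may assume that the neighbors of $v_0$ in $\{v_1, v_2, \ldots, v_{t-1}\}$ form a subset of $\{v_{t-2}\}$.

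Under this assumption, the chain conditions on $C$ essentially pin down the edges of $G[C]$ up to two binary choices: whether $v_0 \sim v_{t-2}$ and whether $v_0 \sim v_t$. Specifically, propagating the chain definition forward shows that $G[\{v_1,\ldots,v_{t-1}\}]$ is close to a path (with a possible ``flip'' at $v_{t-2}$, depending on whether $v_0 \sim v_{t-2}$), and $v_t$'s neighborhood in $\{v_0, \ldots, v_{t-1}\}$ is either $\{v_{t-1}\}$ or $\{v_0, v_1, \ldots, v_{t-2}\}$. In each of the four resulting sub-cases I then check directly, using the criterion of Proposition~\ref{prop:primechain}, that either $C_b$ or $C_c$ satisfies its hypothesis and is therefore prime. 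Roughly, $C_c$ tends to work when $v_0 \sim v_t$ and $C_b$ when $v_0 \not\sim v_t$.

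The main obstacle is the casework, and in particular the boundary case $t = 4$, where coincidences such as $v_2 = v_{t-2}$ and $v_3 = v_{t-1}$ alter the structure and force the ``correct'' subchain to switch from the pattern observed for $t \ge 5$; this small case needs to be verified separately. Each sub-case is otherwise just a direct computation of neighborhoods of the first two vertices of $C_b$ or $C_c$ against the relevant second-to-last vertex, matched to Proposition~\ref{prop:primechain}'s condition.
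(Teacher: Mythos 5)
Your plan---pass to a length-$(t-1)$ subchain and test it against Proposition~\ref{prop:primechain}---is in the same spirit as the paper's, but the paper uses a sharper pair of normalizations that eliminates the casework entirely and never needs $C_a$. Since $v_{t-1}$ is the unique neighbor or unique non-neighbor of $v_t$ in $\{v_0,\ldots,v_{t-1}\}$, the vertex $v_t$ is either anticomplete or complete to $\{v_0,\ldots,v_{t-2}\}$; after taking the complement (which preserves chains and primeness) one may assume $v_t$ is complete to $\{v_0,\ldots,v_{t-2}\}\supseteq\{v_0,v_1,v_2\}$. Likewise exactly one of $v_0,v_1$ is adjacent to $v_2$; after swapping $v_0$ and $v_1$ one may assume $v_0\sim v_2$ and $v_1\not\sim v_2$. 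Now $v_1,v_2,\ldots,v_t$ is a chain of length $t-1\ge 3$ in which $v_1$ and $v_2$ each have $v_t\neq v_{t-1}$ as a neighbor and each other as a non-neighbor distinct from $v_{t-1}$ (this uses $t>3$), so Proposition~\ref{prop:primechain} applies directly. That is the entire proof.

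As written, your argument is not complete. The four sub-cases and the $t=4,5$ boundary behavior are deferred, and the informal guide ``$C_c$ tends to work when $v_0\sim v_t$, $C_b$ when $v_0\not\sim v_t$'' is not reliable even in the case you flag as special: for $t=4$ with $v_0\sim v_2=v_{t-2}$ and $v_0\sim v_4=v_t$, propagating the chain conditions forces the edge set $\{v_0v_2,\,v_2v_3,\,v_0v_4,\,v_1v_4,\,v_2v_4\}$; then $C_c=v_0,v_2,v_3,v_4$ fails Proposition~\ref{prop:primechain} (the only non-neighbor of $v_0$ in $\{v_2,v_3,v_4\}$ is $v_3=v_{t-1}$), while $C_b=v_1,v_2,v_3,v_4$ is the one that is prime (it induces a $P_4$). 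The casework would presumably close in all sub-cases, but it has not actually been carried out, and the paper's normalization shows it is unnecessary: by using the complement to control $v_t$ rather than to control $v_0$'s failure in $C_a$, one gets a single subchain that works unconditionally.
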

\begin{proof}
  Let $v_0,v_1,\ldots,v_t$ be a chain and let $G$ be the graph induced on $\{v_0,v_1,\ldots,v_t\}$. By taking the complement and swapping $v_0$ and $v_1$, we may assume that $v_t$ is complete to $\{v_0,v_1,v_2\}$ and $v_0$ is adjacent to $v_2$. Now consider the  chain $v_1,v_2,\ldots,v_t$. We know that $v_1$ and $v_2$ are non-adjacent and $v_t$ is complete to $\{v_1,v_2\}$ which implies that $v_1,v_2,\ldots,v_t$ is a chain inducing a prime subgraph by Proposition~\ref{prop:primechain}.
\end{proof}

\section{Prime graphs containing a large independent set}\label{sec:first}

If a prime graph $G$ is very large, then it will have a large independent set or a large clique by Ramsey's Theorem. If it has a large clique, then we may take the complement graph and assume that $G$ has a large independent set, because the complement of $G$ is also prime. In this section, we apply Ramsey's Theorem to extract some induced subgraphs that are not necessarily prime. Later sections will be devoted to growing these subgraphs into prime subgraphs.

\begin{PROP}\label{prop:stable}
  For all integers $n,n_1,n_2>0$,there exists $N=f(n,n_1,n_2)$ such that 
  every prime graph with an $N$-vertex independent set 
  contains an induced subgraph 
  isomorphic to 
  \begin{enumerate}
  \item  a  spider with $n$ legs,
  \item $\overline{L(K_{2,n})}$,
  \item the half-graph of height $n$,  
  \item the disjoint union of $n_1$ copies of $K_2$, denoted by $n_1K_2$, or
  \item the half split graph of height $n_2$.
  \end{enumerate}
\end{PROP}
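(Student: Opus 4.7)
The strategy is to exploit primeness of $G$ to enforce combinatorial density on the bipartite adjacency between $I$ and $V(G)\setminus I$, and then extract one of the five target substructures via iterated Ramsey-type arguments.

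Let $M$ denote the set of vertices of $V(G)\setminus I$ that are mixed on $I$. The starting observation is that primeness gives $|M|\ge \log_2|I|$. Indeed, each vertex of $V(G)\setminus(I\cup M)$ is complete or anticomplete to $I$, so two vertices $a,a'\in I$ with the same neighborhood inside $M$ would be indistinguishable by every vertex of $V(G)\setminus\{a,a'\}$; since $I$ is independent, $\{a,a'\}$ would then be a homogeneous set, contradicting primeness. Hence the map $a\mapsto N(a)\cap M$ from $I$ into $2^{M}$ is injective. After this, applying Ramsey's theorem to $G[M]$, I extract a subset $M_1\subseteq M$ that is either a clique or an independent set in $G$, of size still a (slow) function of $|I|$.

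The next step is to analyse the bipartite adjacency between $M_1$ and $I$. A pigeonhole on the cardinality $|N(v)\cap I|$ for $v\in M_1$ reduces to one of three regimes: small, balanced, or co-small. In the small regime, a sunflower-lemma argument (after excising a bounded common ``core'' from $I$) produces a large induced matching between a sub-collection of $M_1$ and a subset of $I$; combined with the clique or independent structure on $M_1$ this directly yields either a thin spider (outcome~(1)) or $n_1K_2$ (outcome~(4)). The co-small regime is symmetric and produces the thick spider~(1) or $\overline{L(K_{2,n})}$~(2) via the analogous argument on non-neighborhoods. In the balanced regime, an Erd\H{o}s--Szekeres style iterative sorting yields a half-graph bipartite pattern between a sub-collection of $M_1$ and a subset of $I$; combined with $M_1$ being a clique this gives the half split graph~(5), and with $M_1$ independent it gives the half-graph $H_n$~(3).

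The main technical obstacle lies in the small-degree sunflower step: the sunflower extracted might have a nonempty common core (vertices of $I$ neighboring \emph{every} extracted $v$), which must be removed from $I$ before the matching can be read off; in the degenerate case where the core equals the whole neighborhood, the chosen $v$'s cease to be mixed on the restricted $I$, and one is forced to recurse on $I$ minus the core. Synchronising the quantitative bounds across the three degree regimes, the clique/independent extraction on $M_1$, and the possible recursion requires careful parameter bookkeeping. A secondary but still nontrivial obstacle is invoking the appropriate bipartite sorting lemma in the balanced regime so as to produce the half-graph pattern while preserving mixedness of each extracted $v$ on the reduced index set.
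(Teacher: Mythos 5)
Your opening observation is correct: if two vertices $a,a'\in I$ had the same neighborhood inside the set $M$ of vertices mixed on $I$, then $\{a,a'\}$ would be homogeneous, so $a\mapsto N(a)\cap M$ is injective and $\lvert M\rvert\ge\log_2\lvert I\rvert$. But the next step is where the plan breaks.

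After you apply Ramsey to $G[M]$ and pass to a clique or independent set $M_1\subseteq M$, there is no guarantee that $M_1$ has any usable interaction with $I$. The injectivity you established says that vertices of $I$ are distinguished by \emph{some} vertices of $M$; it says nothing about vertices of $M$ having distinct (or even varying) neighborhoods in $I$, and a fortiori nothing about vertices of $M_1$. Concretely, $M$ could consist of a small set $M'$ of ``distinguishing'' vertices together with an enormous clique $M''$ all of whose members are mixed on $I$ but share the \emph{same} $I$-neighborhood, say $N(v)\cap I=\{a_1\}$ for all $v\in M''$. Ramsey will happily hand you $M_1\subseteq M''$, and then the bipartite graph between $M_1$ and $I$ is a star: it contains no induced matching, co-matching, or half-graph of height $\ge 2$ no matter how large $M_1$ is. Your three-regime pigeonhole, the sunflower step, and the ``Erd\H{o}s--Szekeres sorting'' all take as implicit input that the $I$-neighborhoods of vertices of $M_1$ are sufficiently varied, and that input is simply not supplied. (The balanced-regime half-graph extraction has the same issue independently: a bipartite graph with all degrees balanced need not contain a half-graph of specified height unless the neighborhoods are known to be genuinely distinct in a structured way.) You also acknowledge but do not resolve the recursion in the sunflower step when the core swallows a whole neighborhood, and it is not clear the recursion terminates with a usable bound.

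The paper avoids this trap by never performing a global Ramsey step on the mixed vertices. Instead it builds a ``regular triple'' $(A,X,Y)$ incrementally: $A$ is a shrinking independent set, and at each step primeness forces a vertex $y$ mixed on the \emph{current} $A$. One adds $y$ to $Y$ and a suitable $x\in A$ to $X$, and replaces $A$ by the larger of $N(y)\cap A$ and $A\setminus N(y)$. Because each new $y_i$ must be mixed on the already-shrunk $A$, it is forced to distinguish $x_i$ from all later $x_j$ and from the surviving $A$, producing a controlled ``staircase'' between $X$ and $Y$. Only then is Ramsey applied, and crucially it is applied to a single $2$-bit edge coloring of $K_m$ that records simultaneously the $x_i$-$y_j$ adjacency and the $y_i$-$y_j$ adjacency. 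Decoupling these two into ``first Ramsey on $M$, then look at $M\times I$'' — as your plan does — is exactly what loses the forced structure.
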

\begin{proof}
  We say that $(A,X,Y)$ is a \emph{regular} triple if 
  $A$, $X$, $Y$ are disjoint subsets of vertices such that
  $A\cup X$ is independent, $\abs{X}=\abs{Y}$, and 
  $X$ and $Y$ can be ordered 
  $X=\{x_1,x_2,\ldots,x_n\}$,
  $Y=\{y_1,y_2,\ldots,y_n\}$ such that for all $i$,
  the vertex $y_i$ is either 
  \begin{enumerate}
  \item adjacent to $x_i$ and anti-complete to 
    $\{x_{i+1},x_{i+2},\ldots,x_{n}\}\cup A$, or
  \item non-adjacent to $x_i$ and complete to
    $\{x_{i+1},x_{i+2},\ldots,x_{n}\}\cup A$.
  \end{enumerate}
  (We allow $y_i$ to be adjacent to $x_j$ if $j<i$.)

  We claim that if $(A,X,Y)$ is a regular triple of a prime graph $G$ with $1<\abs{A}<\abs{V(G)}$,
  then there is a regular triple $(A',X',Y')$ such that $A'\subseteq A$, $\abs{A'}\ge
  \abs{A}/2$ and $\abs{X'}=\abs{Y'}=\abs{X}+1=\abs{Y}+1$.
  Since $A$ is not homogeneous, there is a vertex $y$ such that $y$
  has both a neighbor and a non-neighbor in $A$. 
  Since $A\cup X$ is independent, $y$ does not belong to $X$. Since each $y_i$ is complete or anti-complete to $A$, $y$ is not in $ Y$.
  Let $A_y$ be the set of all neighbors of $y$ in $A$.
  If $\abs{A_y}\ge \abs{A}/2$, then $(A_y,X\cup \{x\},Y\cup \{y\})$
  is a regular triple for a non-neighbor $x$ of $y$ in $A$.
  If $\abs{A_y}< \abs{A}/2$, then 
  $(A\setminus A_y,X\cup \{x\},Y\cup \{y\})$
  is a regular triple for a neighbor $x$ of $y$ in $A$. This proves the claim.

  \medskip
  Now let us use the above claim to prove this proposition.
  Let $m$ be the Ramsey number $R(n_1+n,2n-1,n+n_2,n+n_2-1)$
  and $N=2^{m+1}$. 
  (The Ramsey number $R(n_1,n_2,\ldots,n_k)$ is the minimum $c$ such that for every $k$-edge-coloring of the complete graph $K_c$, there is $i\in \{1,2,\ldots,k\}$ such that it contains a complete subgraph on $n_i$ vertices in which all edges are colored by  $i$.)

  Let $S$ be an independent set of size $N$.
  Then $(S,\emptyset,\emptyset)$ is a regular triple.
  By the claim, there exists a regular triple $(A,X_0,Y_0)$ with
  $\abs{X_0}=\abs{Y_0}= m $.
  Let $X_0=\{x_1,x_2,\ldots,x_m\}$ and $Y_0=\{y_1,y_2,\ldots,y_m\}$ be ordered according to the definition of a regular triple.
  
  Let $K_m$ be the complete graph on the vertex set $\{1,2,\ldots,m\}$.
  Let us color the edges of $K_m$ as follows:
  The color of an edge $ij$ ($i<j$) of $K_m$ is defined as $(a,b)\in \{0,1\}^2$ such that
  \begin{align*}
    a=1& \text{ if and only if $x_i$ is adjacent to $y_j$},\\
    b=1& \text{ if and only if $y_i$ is adjacent to $y_j$}.
  \end{align*}
  By Ramsey's Theorem, there exists a subset $I$ of $\{1,2,\ldots,m\}$ such that all edges in $K_m[I]$ have the same color $(a,b)$ and 
  \[
  \abs{I}=
  \begin{cases}
    n_1+n &\text{if }(a,b)=(0,0),\\
    2n-1 &\text{if }(a,b)=(1,0),\\
    n+n_2 &\text{if }(a,b)=(0,1),\\
    n+n_2-1 &\text{if }(a,b)=(1,1).\\
  \end{cases}
  \]
  Let us define the following sets.
  \begin{align*}
  I_1&=\{i\in I: \text{$x_i$ is adjacent to $y_i$} \},\\
      I_2&=I\setminus I_1,\\
    Z_1&=\{x_i: i\in I_1\}\cup \{y_i:i\in I_1\}, \\
                                                   Z_2&=\{x_i: i\in I_2\}\cup\{y_i:i\in I_2\}, \\
    Z_3&=\{x_i: i\in I_2,~ i>j\text{ for some }j\in I_2\} \\
    &\qquad \cup \{y_i:i\in I_2,~ i<j\text{ for some }j\in I_2\}.
  \end{align*}

  If $(a,b)=(0,0)$ and $\abs{I_1}\ge n_1$, then $G[Z_1]$ has an induced subgraph isomorphic to $n_1K_2$.
  If $(a,b)=(0,0)$ and $\abs{I_2}\ge n+1$, then $G[Z_3]$ has an induced subgraph isomorphic to a half-graph of height $n$.

  If $(a,b)=(1,0)$ and $\abs{I_1}\ge n$, then $G[Z_1]$ has an induced subgraph isomorphic to a half-graph of height $n$.
  If $(a,b)=(1,0)$ and $\abs{I_2}\ge n$, then $G[Z_2]$ has an induced subgraph isomorphic to $\overline{L(K_{2,n})}$.

  If $(a,b)=(0,1)$ and $\abs{I_1}\ge n$, then $G[Z_1]$ has an induced subgraph isomorphic to a  thin spider with $n$ legs. 
  If $(a,b)=(0,1)$ and $\abs{I_2}\ge n_2+1$, then $G[Z_3]$ has an induced subgraph isomorphic to a half split graph of height $n_2$.

  If $(a,b)=(1,1)$ and $\abs{I_1}\ge n_2$, then $G[Z_1]$ has an induced subgraph isomorphic to a half split graph of height $n_2$.
  If $(a,b)=(1,1)$ and $\abs{I_2}\ge n$, then $G[Z_2]$ has an induced subgraph isomorphic to a  thick spider with $n$ legs.
\end{proof}

\section{Making use of a big induced matching}\label{sec:matching}
Outcome (4) of Proposition~\ref{prop:stable} states that the graph contains a large induced matching. 
By Corollary~\ref{cor:prime}, we only need to consider graphs that contain no long chain.
In the next proposition, we deal with the case when a prime graph has a large induced matching and no long chain.

\begin{PROP}\label{prop:matching}
  Let $t\ge 2$ and $n$, $n'$ be positive integers.  Let $h(n,n',2)=n$ and 
  \[    h(n,n',i)=(n-1)R(n,n,n,n,n,n,n,n',n',h(n,n',i-1))+1\]
  for an integer $i>2$.
  Let $v$ be a vertex of a graph $G$
  and let $M$ be an induced matching of $G$ consisting of $h(n,n',t)$ edges not incident with $v$.
  If for each edge $e=xy$  in $M$, there is a chain of length at most $t$ from  $\{x,y\}$ to $v$,  then $G$ has an induced subgraph isomorphic to one of the following:
  \begin{enumerate}
  \item $K_{1,n}^{(1)}$, 
  \item the half-graph of height $n$, 
  \item $\overline{L(K_{2,n})}$,
  \item a spider with $n$ legs, or
  \item the half split graph of height $n'$.
  \end{enumerate}
\end{PROP}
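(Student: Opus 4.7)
The plan is to induct on $t$. In the base case $t = 2$, each chain has the form $v_0^e, v_1^e, v_2^e = v$ with $\{v_0^e, v_1^e\} = \{x_e, y_e\}$, so $v$ is mixed on every edge of $M$. After relabeling each edge so that $v \sim x_e$ and $v \not\sim y_e$, the set $\{v\} \cup V(M)$ with $\abs{M} = h(n, n', 2) = n$ induces $K_{1, n}^{(1)}$ with centre $v$. For $t > 2$, the strategy is to either produce one of the listed subgraphs directly, or extract a smaller induced matching whose edges admit chains of length at most $t - 1$ to $v$ and invoke the inductive hypothesis with parameter $t - 1$.

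For each $e \in M$ fix a chain $C_e = v_0^e, v_1^e, \ldots, v_{n_e}^e = v$ of length $n_e \leq t$ and set $w_e := v_2^e$; then $w_e$ is mixed on $\{x_e, y_e\}$, and using the freedom in the chain definition (swapping $v_0^e$ and $v_1^e$ if necessary) we relabel so that $v_1^e = x_e$, $v_0^e = y_e$, $w_e \sim x_e$, and $w_e \not\sim y_e$. Since $M$ is an induced matching, a short check gives $w_e \notin V(M)$ for every $e$; moreover $w_e = v$ precisely when $n_e = 2$. Apply pigeonhole to the multiset $\{w_e\}$: if some vertex $w$ coincides with $w_e$ for at least $n$ edges (this case absorbs $w = v$ arising from edges with $n_e = 2$), then those $n$ edges together with $w$ induce $K_{1,n}^{(1)}$ centred at $w$, giving outcome (1). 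Otherwise each vertex appears as $w_e$ at most $n - 1$ times, so greedy selection yields a sub-matching $M^*$ of size $R(n, n, n, n, n, n, n, n', n', h(n, n', t-1))$ with pairwise distinct $w_e$; at most one edge of $M^*$ can have $n_e = 2$ and we discard it.

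On the remaining edges of $M^*$, fix an ordering and apply Ramsey's theorem with the $10$ colours specified. For each pair $e < e'$ the colour groups the $5$-bit adjacency pattern $(w_e \sim w_{e'},\ w_e \sim x_{e'},\ w_e \sim y_{e'},\ w_{e'} \sim x_e,\ w_{e'} \sim y_e)$ into ten classes arranged so that: (a) seven classes each give one of outcomes (1)--(4) with parameter $n$, by exhibiting the corresponding prime graph on an appropriate subset of $\{w_e, x_e, y_e\}_e$ (for instance, a clique on $\{w_e\}$ with $w_e \not\sim y_{e'}$ for $e \neq e'$ yields a thick spider on $\{w_e, y_e\}$, while an independent $\{w_e\}$ together with a staircase of $w_e y_{e'}$ adjacencies yields a half-graph); (b) two classes each give a half split graph of height $n'$, arising from a hybrid staircase-plus-clique pattern; and (c) the final ``reduction'' class is the pattern with $w_e \not\sim w_{e'}$, $w_e \not\sim x_{e'}$, $w_{e'} \not\sim x_e$ for all pairs, in which case $\{x_e w_e\}$ is an induced matching of size $h(n, n', t - 1)$ and each edge admits the chain $x_e, w_e, v_3^e, \ldots, v_{n_e}^e = v$ of length $n_e - 1 \leq t - 1$ from $\{x_e, w_e\}$ to $v$ (the chain property is preserved under deleting $y_e = v_0^e$ from the prefix), so the inductive hypothesis applies.

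The main obstacle is designing the ten-colour partition: several of the $2^5$ raw patterns must be merged into the same class so that each class robustly produces the claimed prime structure, and one must verify that the unrestricted bits within a class cannot spoil the intended induced subgraph. The two $n'$-classes in particular require care, since they rely on a mixed (non-uniform) adjacency pattern between the $w$-side and the $y$-side. A smaller technicality is ensuring $w_e \notin V(M) \cup \{v\}$ after the preliminary reductions, which follows from the induced-matching hypothesis together with the one-edge discard for the unique potential edge with $n_e = 2$ in $M^*$.
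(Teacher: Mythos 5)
Your overall strategy mirrors the paper's proof exactly: induct on $t$, handle the base case as a star, pigeonhole to distinct ``third'' vertices (with a preliminary reduction for length-$2$ chains), Ramsey with ten colours, and in the recursion class replace the edge $x_ey_e$ by the edge $x_ew_e$ and truncate the chain by one. The recursion step is correctly justified; deleting $v_0^e$ from the prefix preserves the chain property because the immediate-predecessor uniqueness only passes to subsets. The bookkeeping also works out (pigeonhole actually gives $R(\ldots)+1$ distinct third vertices before you discard the single possible length-$2$ survivor, so you end up with $R(\ldots)$ as required).

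The genuine gap is the ten-colour partition, which you identify as ``the main obstacle'' but do not resolve. The paper does not merge raw $5$-bit patterns arbitrarily; the point is that the bits are not independent. Define the colour of the pair $\{e,e'\}$ (with $e<e'$) as one of two special colours if $w_e$ is mixed on $\{x_{e'},y_{e'}\}$, or if it is not but $w_{e'}$ is mixed on $\{x_e,y_e\}$: in a monochromatic set of either special colour, the $w$-vertex of the extreme index is mixed on \emph{every} other matching edge, immediately producing $K_{1,n}^{(1)}$ with no ``unrestricted bits'' to worry about. In the remaining (non-mixed) case $w_e$ is complete or anticomplete to $\{x_{e'},y_{e'}\}$ and $w_{e'}$ to $\{x_e,y_e\}$, so the $5$-bit vector collapses to the $3$ bits $(w_e\sim w_{e'},\, w_e\sim y_{e'},\, w_{e'}\sim y_e)$, yielding exactly $8$ fully determined classes — $7$ giving outcomes (1)--(4), $2$ giving outcome (5), and $(0,0,0)$ giving the recursion. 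So there are genuinely $10$ colours, not $32$ merged into $10$, and each class uniquely determines the induced subgraph; this is what your sketch is missing. A smaller but concrete error is your illustration of the thick spider: with the $\{w_e\}$ forming a clique and $w_e\not\sim y_{e'}$ for $e\neq e'$, the set $\{w_e\}\cup\{y_e\}$ induces a split graph with \emph{no} crossing edges, not a spider (recall you also fixed $w_e\not\sim y_e$). The correct picture is a clique on the $w$'s with $w_e$ \emph{complete} to $\{x_{e'},y_{e'}\}$ for $e\neq e'$, which on $\{w_e,y_e\}$ is a thick spider; similarly a clique on the $w$'s anticomplete to the other edges gives a thin spider on $\{w_e,x_e\}$.
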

\begin{proof}
  We proceed by induction on $t$. If $t=2$, then trivially $V(M)\cup\{v\}$ induces a subgraph of $G$ isomorphic to $K_{1,n}^{(1)}$. So we may assume that $t>2$.

  We will find an induced matching $M'$ consisting of $h(n,n',t-1)$ edges not incident with $v$ such that $G$ has a chain of length at most $t-1$ from each edge in $M'$ to $v$. For an edge $e$ in $M$, let $C_e$ be a chain of length at most $t$ from $e$ to $v$.

  If there are $n$ chains of length $2$ in $\{C_e:e\in M\}$, then we obtain an induced subgraph isomorphic to $K_{1,n}^{(1)}$. Thus, we may assume that there exists a subset $M_1$ of $M$ such that the length of $C_e$ is larger than $2$ for all $e\in M_1$ and $\abs{M_1}\ge \abs{M}-n+1$.   We say that a vertex $w$ is \emph{mixed} on an edge $e$ if $w$ is not an end of $e$ and $w$ is adjacent to exactly one end of $e$.   
 Note that the first and second vertices of $C_e$ are the ends of $e$ and the third vertex of $C_e$ is mixed on $e$.

  If a vertex $w$ is the third vertex of at least $n$ chains in $\{C_e:e\in M_1\}$, then the vertex $w$ with those $n$ edges in $M_1$ induces a subgraph isomorphic to $K_{1,n}^{(1)}$. Thus we may choose a subset $M_2$ of $M_1$ such that $\abs{M_2}\ge \lceil \frac1{n-1} \abs{M_1}\rceil$ and the third vertices of chains in $\{C_e:e\in M_2\}$ are distinct. Since $M_2$ is an induced matching, no third vertex of $C_e$ is an end of some edge in $M_2$.

  Let $m=\abs{M_2}$. Let $e_1,e_2,\ldots,e_m$ be the edges in $M_2$.  For each edge $e_i$, let $z_i$ be the third vertex in $C_{e_i}$ mixed on~$e_i$. Let $x_i, y_i$ be  the ends of $e_i$ such that $y_i$ is adjacent to $z_i$.

  Let us construct an edge-coloring of $K_m$ on the vertex set $\{1,2,\ldots,m\}$ as follows. For $1\le i<j\le m$, we color the edge $ij$ of $K_m$ by one of the $10$ colors $(a,b,c)\in \{0,1\}^3\cup \{(2,2,2),(3,3,3)\}$ depending on the adjacencies of the pairs $(z_i,z_j)$, $(z_i,y_j)$, $(y_i,z_j)$.
We set $(a,b,c)=(2,2,2)$ if $z_i$ is mixed on $e_j$, and $(a,b,c)=(3,3,3)$ if $z_i$ is not mixed on $e_j$ and $z_j$ is mixed on $e_i$.  If $z_i$ is not mixed on $e_j$ and $z_j$ is not mixed on $e_i$, then we assign colors $(a,b,c)\in \{0,1\}^3$ as follows;
we set $a=1$ if $z_i$, $z_j$ are adjacent, $b=1$ if $z_i$ and $y_j$ are adjacent, and $c=1$ if $y_i$, $z_j$ are adjacent.

  We apply Ramsey's theorem to $K_m$ and obtain a subset $I$ of $V(K_m)$ such that all edges in $K_m[I]$ have the same color $(a,b,c)$ and 
\[\abs{I}=
\begin{cases}
  h(n,n',t-1) &\text{if }(a,b,c)=(0,0,0),\\
  n' & \text{if }(a,b,c)\in\{(1,1,0),(1,0,1)\},\\
  n &\text{otherwise.}
\end{cases}
\]

  Let $G_1$ be the subgraph of $G$ induced on $x_i,z_i$ for all $i\in I$. Let $G_2$ be the subgraph of $G$ induced on $y_i,z_i$ for all $i\in I$.
  
  If $(a,b,c)=(2,2,2)$ or $(3,3,3)$, then the subgraph of $G$ induced on $x_i,y_i$ for all $i\in I$ and one vertex $z_j$ for the maximum or minimum  $j$  in $I$ is isomorphic to $K_{1,n}^{(1)}$.

  If $(a,b,c)=(1,0,0)$, then $G_2$ is a  thin spider with $n$ legs. If $(a,b,c)=(1,1,1)$, then $G_1$ is a  thick spider with $n$ legs. If $(a,b,c)=(1,1,0)$ or $(1,0,1)$, then $G_2$ is a half split graph of height $n'$. If $(a,b,c)=(0,1,0)$ or $(0,0,1)$, then $G_2$ is a half-graph of height $n$. If $(a,b,c)=(0,1,1)$, then $G_1$ is isomorphic to $\overline{L(K_{2,n})}$.

  If $(a,b,c)=(0,0,0)$, then let $M'=\{y_iz_i: i\in I\}$. Clearly $M'$ is an induced matching of size $h(n,n',t-1)$ and $C_{x_iy_i}\setminus\{x_i\}$ is a chain of length at most $t-1$ from $\{y_i,z_i\}$ to $v$ for each $e\in M'$. Since $C_e$ has at least $4$ vertices for each $e\in M_1$, no edge of $M'$ is incident with $v$. We deduce the conclusion by applying the induction hypothesis to $M'$.
\end{proof}

\section{Prime graphs containing a large half split graph}\label{sec:halfsplit}

Since the graphs in outcomes (1)--(3) of Proposition~\ref{prop:stable} are prime and we have just dealt with outcome (4), it remains to handle outcome (5), the situation when a prime graph contains a large half split graph as an induced subgraph. We do that in this section.

\begin{PROP}\label{prop:half}
  For every positive integer $n$, there exists $N=g(n)$ such that 
  every prime graph having a half split graph of height at least~$N$  as an induced subgraph contains a chain of length~$n+1$ or an induced subgraph 
  isomorphic to one of  $H'_{n,I}$, $H^*_{n}$, and $\overline{H^*_{n}}$.
\end{PROP}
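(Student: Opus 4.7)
The plan is to combine primeness of $G$ with Ramsey's theorem. First, note that $\{a_N, b_N\}$ is homogeneous in $H'_N$: every $a_j$ with $j < N$ is anticomplete to it, and every $b_k$ with $k < N$ is complete. Since $G$ is prime, there is $v \in V(G) \setminus V(H'_N)$ mixed on $\{a_N, b_N\}$; after interchanging the roles of $a_N$ and $b_N$ (which corresponds to the swap isomorphism exhibiting that $H'_m$ plus a pendant at the dominant clique vertex $b_m$ is isomorphic to $H^*_m$) we may assume $v \sim a_N$ and $v \not\sim b_N$. Next, apply Ramsey's theorem to the $4$-coloring of $\{1, \ldots, N-1\}$ given by the pair $(v \sim a_i,\, v \sim b_i)$; for $N = g(n)$ chosen as a sufficiently large iterated Ramsey number, we obtain a subset $S \subseteq \{1, \ldots, N-1\}$ with $|S| \geq n$ on which $v$'s pattern is uniform in each coordinate.

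This produces four sub-cases. Three of them produce one of the target structures directly: if $v$ is adjacent to every $a_i$ and to no $b_i$ for $i \in S$, then the induced subgraph on $\{a_i, b_i : i \in S \cup \{N\}\} \cup \{v\}$ is $H'_{|S|+1, I}$; if $v$ is adjacent to none of them, the induced subgraph is $H^*_{|S|+1}$; and if $v$ is adjacent to every $a_i$ and every $b_i$ for $i \in S$ (so $v$ is adjacent to everything in the restricted subgraph except $b_N$), then an explicit bijection exchanging the roles of the independent and clique sides exhibits the induced subgraph on $\{a_i : i \in S\} \cup \{b_j : j \in S \cup \{N\}\} \cup \{v\}$ as $\overline{H^*_{|S|}}$. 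Since $|S| \geq n$, each such outcome contains the desired target.

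The remaining sub-case is that $v$ is adjacent to every $b_i$ and to no $a_i$ for $i \in S$. Then $v$ has the same open neighborhood on $S \cup \{N\}$ as $b_N$, so $\{v, b_N\}$ is a false-twin pair in the restricted induced subgraph; crucially, $a_N, b_N, v$ already forms a chain of length $2$, with $b_N$ the unique non-neighbor of $v$ among its predecessors. Primeness of $G$ applied to $\{v, b_N\}$ supplies another vertex $w$ mixed on this pair, and a further Ramsey application on $w$ (restricted to a refinement $S' \subseteq S$) yields a new four-way case analysis. In each resulting pattern one either extracts one of $H'_{n, I}$, $H^*_n$, $\overline{H^*_n}$ by exactly the same arguments as above, or finds that $w$ is another false twin positioned so that $v$ is its unique non-neighbor among the predecessors, extending the chain to $a_N, b_N, v, w$. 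Iterating this step at most $n - 1$ additional times either terminates in one of the three target structures or lengthens the chain to $n + 1$.

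The main obstacle is controlling this iteration. At each step one must verify that the newly produced mixed vertex actually extends the chain in the required sense (the chain's last vertex must be the unique neighbor or non-neighbor of the new vertex among \emph{all} predecessors, not merely among the most recent two), which demands a careful accounting of how each Ramsey-determined pattern interacts with the full prefix of adjacencies already built into the chain. Compounded with the successive Ramsey refinements, this forces $g(n)$ to be a tower of Ramsey numbers of height roughly $n$; bounding this growth and completing the combinatorial bookkeeping of the nested Ramsey classes is the technical heart of the argument.
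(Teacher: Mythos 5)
Your approach diverges from the paper's, and it has a genuine gap in the iteration step that you yourself flag but do not close. You start by taking a single vertex $v$ mixed on $\{a_N,b_N\}$, applying Ramsey to $v$'s adjacency pattern, and then, in the ``false twin'' sub-case, iterating: find $w$ mixed on $\{v,b_N\}$, Ramsey on $w$, and so on, hoping at each stage either to extract a target or to extend the chain $a_N,b_N,v,w,\dots$. The difficulty you name---that a vertex mixed on a two-element set need not have the chain's last vertex as its unique neighbor or unique non-neighbor among \emph{all} predecessors---is not a bookkeeping nuisance but a real obstruction. Primeness applied to $\{v,b_N\}$ controls $w$'s adjacency to exactly those two vertices; it says nothing about $w$'s adjacency to $a_N$ or to earlier chain vertices. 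Concretely, if $w\sim b_N$, $w\not\sim v$, $w\not\sim a_N$, and $w$ is anti-complete to all $a_i,b_i$ on the refined Ramsey set, then $w$ neither extends the chain (since $v$ is not $w$'s unique non-neighbor among $\{a_N,b_N,v\}$) nor produces any of the listed target graphs, and your proposal gives no instruction for that case. There is also a minor arithmetic slip in your case (c): the set $\{a_i:i\in S\}\cup\{b_j:j\in S\cup\{N\}\}\cup\{v\}$ has $2|S|+2$ vertices, while $\overline{H^*_{|S|}}$ has $2|S|+1$, so one $b$-vertex must be discarded before the isomorphism can hold.

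The paper sidesteps the entire iteration by a stronger first move. Rather than extracting one mixed vertex at a time, it invokes Proposition~\ref{prop:chain} to obtain a single chain $u_0=a_N,\,u_1=b_N,\,\dots,\,u_t$ with a \emph{fixed endpoint} $u_t\in\{a_1,b_1\}$. Because $a_N,b_N$ are anti-complete to the pruned set $A$ and complete to the pruned set $B$, while $a_1,b_1$ have the opposite behavior, the chain is guaranteed to have a first index $i$ where $u_i$ becomes complete to $A$ or anti-complete to $B$; the three possible behaviors of this transition vertex (paired with a suitable predecessor $u_{i-1}$ or $u_{i-2}$) give $H'_{n,I}$, $\overline{H^*_n}$, and $H^*_n$ directly, with the chain-length hypothesis bounding $t$ so that only a bounded amount of pruning (a pigeonhole factor of $4^{n-2}$, not a tower of Ramsey numbers) is needed. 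This ``chain to a fixed target'' device is exactly what your argument lacks: it replaces an uncontrolled iteration with a structure that must cross a prescribed threshold. If you want to salvage your approach you would need to engineer a comparable invariant forcing termination, which is precisely the content of Proposition~\ref{prop:chain}.
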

\begin{proof}
  Let $N=4^{n-2}(n+1)+2(n-2)+1$.
  Let $G$ be a graph having $H_N$ as an induced subgraph.
  Let $a_1,a_2,\ldots,a_N,b_1,b_2,\ldots,b_N$ be the vertices of a
  half split subgraph $H'_N$ such that $\{a_1,a_2,\ldots,a_N\}$ is independent and $\{b_1,b_2,\ldots,b_N\}$ is a clique and $a_i$ is adjacent to $b_j$ if and only if $i\ge j$.

  Suppose that $G$ has no chain of length~$n+1$ and no induced subgraph isomorphic to $H'_{n,I}, H^*_{n}$, or $\overline{H^*_{n}}$.
  Let $u_0$, $u_1$, $u_2$, $\ldots$, $u_t$ be a shortest chain from $\{a_N,b_N\}$ to $a_1$ or $b_1$ where $u_0=a_N$ and $u_1=b_N$.

  Let $N_1=N-2(n-2)=4^{n-2}(n+1)+1$. 
  Since $t\le n$,  there exists $1=i_1<i_2<i_3<\ldots<i_{N_1}=N$ such that neither $a_{i_j}$ nor $b_{i_j}$ is in $\{u_2, u_3,\ldots, u_{t-1}\}$ for all $j\in \{1,2,\ldots,N_1\}$.
  The subgraph of $G$ induced on $\{a_{i_1},a_{i_2},\ldots,a_{i_{N_1}},b_{i_1},b_{i_2},\ldots,b_{i_{N_1}}\}$ is isomorphic to $H'_{N_1}$.

  Let $N_2=\lceil N_1/4^{n-2}\rceil=n+2$. By the pigeonhole principle, there is a subsequence $p_1=1<p_2<p_3<\ldots<p_{N_2}=N$ of $i_1,i_2,\ldots,i_{N_1}$ such that 
  none of $u_2,u_3,u_4,\ldots,u_{t-1}$ is mixed on 
  $A=\{a_{p_2},a_{p_3},\ldots,a_{p_{(N_2-1)}}\}$ or mixed on $B=\{b_{p_2},b_{p_3},\ldots,b_{p_{(N_2-1)}}\}$. Note that $\abs{A}=\abs{B}=n$.
  Again, the subgraph of $G$ induced on $\{a_1,a_N,b_1,b_N\}\cup A\cup B$ is isomorphic to $H'_{N_2}$.

  By the construction, both $u_0$ and $u_1$ are anti-complete to $A$ and complete to $B$.
  Let $i$ be the minimum such that $u_i$ is complete to $A$ or anti-complete to $B$. 
  Then such $i$ exists because $u_t\in\{a_1,b_1\}$ is either complete to $A$ or anti-complete to $B$.

  If $u_i$ is complete to $A$ and  anti-complete to $B$, 
  then the subgraph of $G$ induced on $A\cup B\cup \{u_i\}$ is 
  isomorphic to $H'_{n,I}$.

  If $u_i$ is complete to $A\cup B$, then 
  let   $p\in\{u_{i-1},u_{i-2}\}$ be a vertex non-adjacent to $u_i$. 
  By the hypothesis, $p$ is complete to $B$ and
  anti-complete to $A$.
  It follows that the subgraph of $G$ induced on \[(A\cup B\cup \{p,u_i\})\setminus \{b_{p_2}\}\] is isomorphic to $\overline{H^*_{n}}$.

  If $u_i$ is anti-complete to $A\cup B$, 
  then let $q\in \{u_{i-1},u_{i-2}\}$ be a vertex adjacent to $u_{i}$.
  By the hypothesis, $q$ is complete to $B$ and anti-complete to $A$ and therefore 
  the subgraph of $G$ induced on 
  \[(A\cup B\cup  \{q,u_i\})\setminus \{a_{p_{N_2-1}}\}\]
  is isomorphic to $H^*_{n}$.
\end{proof}

\section{Proof of the main theorem}\label{sec:proof}
Now we are ready to put all these things together to prove Theorem~\ref{thm:main}.

\begin{proof}[Proof of Theorem~\ref{thm:main}]
  We may assume that $G$ has no chain of length $n+1$ by Corollary~\ref{cor:prime}.
  Let $f$, $g$, $h$ be functions given by Propositions~\ref{prop:stable}, \ref{prop:half}, and \ref{prop:matching}. Let $m=f(n,h(n,g(n),n),g(n))$ and $N=R(m,m)$. 
  
  Let $G$ be a prime graph with at least $N$ vertices. By Ramsey's Theorem, $G$ has an independent set or a  clique of size $m$. Since $\overline G$ is also prime, we may assume that $G$ has an independent set of size $m$.

  We may assume that  $G$ has no half split graph of height $g(n)$ by Proposition~\ref{prop:half}. 

  By Proposition~\ref{prop:stable}, one of the following holds:
  \begin{enumerate}[(i)]
  \item $G$ has an induced subgraph isomorphic to a spider with $n$ legs.
  \item $G$ has an induced subgraph isomorphic to $\overline{L(K_{2,n})}$.
  \item $G$ has an induced subgraph isomorphic to the half-graph of height~$n$.
  \item $G$ has an induced matching of size $h(n,g(n),n)$.
  \end{enumerate}
  If (i), (ii), or (iii) holds, then we are done. 

  If (iv) holds, then let $M$ be an induced matching of size $h(n,g(n),n)$ and let $v$ be a vertex incident with no edge in $M$. For each edge $e=xy$ of $M$, there is a chain from $\{x,y\}$ to $v$ because $G$ is prime by Proposition~\ref{prop:chain}. Since we assumed that there is no chain of length $n+1$, such chains have length at most $n$.  By Proposition~\ref{prop:matching}, $G$ contains an induced subgraph isomorphic to either $K_{1,n}^{(1)}$, a half-graph of height $n$,  $\overline{L(K_{2,n})}$, or a  spider with $n$ legs. This completes the proof.
\end{proof}


\begin{thebibliography}{1}

\bibitem{Diestel2010}
R.~Diestel.
\newblock {\em Graph theory}, volume 173 of {\em Graduate Texts in
  Mathematics}.
\newblock Springer, Heidelberg, fourth edition, 2010.

\bibitem{DC2004}
G.~Ding and P.~Chen.
\newblock Unavoidable doubly connected large graphs.
\newblock {\em Discrete Math.}, 280(1-3):1--12, 2004.

\bibitem{KO2014}
O.-j. Kwon and S.~Oum.
\newblock Unavoidable vertex-minors in large prime graphs.
\newblock {\em European J. Combin.}, 41:100--127, 2014.

\bibitem{Lovasz1972}
L.~Lov{\'a}sz.
\newblock Normal hypergraphs and the perfect graph conjecture.
\newblock {\em Discrete Math.}, 2(3):253--267, 1972.

\bibitem{MR1984}
R.~H. M{{\"o}}hring and F.~J. Radermacher.
\newblock Substitution decomposition for discrete structures and connections
  with combinatorial optimization.
\newblock In {\em Algebraic and combinatorial methods in operations research},
  volume~95 of {\em North-Holland Math. Stud.}, pages 257--355. North-Holland,
  Amsterdam, 1984.

\bibitem{OOT1993}
B.~Oporowski, J.~Oxley, and R.~Thomas.
\newblock Typical subgraphs of {$3$}- and {$4$}-connected graphs.
\newblock {\em J. Combin. Theory Ser. B}, 57(2):239--257, 1993.

\bibitem{Sumner1971}
D.~P. Sumner.
\newblock {\em Indecomposable graphs}.
\newblock PhD thesis, Univ. of Massachusetts Amherst, 1971.

\bibitem{Sumner1973}
D.~P. Sumner.
\newblock Graphs indecomposable with respect to the {$X$}-join.
\newblock {\em Discrete Math.}, 6:281--298, 1973.

\bibitem{Zverovich2003}
I.~Zverovich.
\newblock Extension of hereditary classes with substitutions.
\newblock {\em Discrete Appl. Math.}, 128(2-3):487--509, 2003.

\end{thebibliography}
\end{document}